\documentclass[letterpaper]{article}

\usepackage{charter}
\usepackage[T1]{fontenc}
\usepackage[utf8]{inputenc}
\usepackage[dvipsnames]{xcolor}
\usepackage[bookmarksnumbered]{hyperref}
\usepackage{enumitem}
\usepackage[super]{nth}
\usepackage[gen]{eurosym}
\usepackage{graphicx}
\usepackage{amssymb}
\usepackage{amsmath}
\usepackage{stmaryrd}
\usepackage{algorithm}
\usepackage{algorithmic}
\usepackage{dsfont}
\usepackage{mathtools}
\usepackage{tikz}
\usepackage[numbers]{natbib}
\usepackage{booktabs}
\usepackage{amsfonts}
\usepackage{amsthm}
\usepackage[font=small]{caption}
\usepackage{titlesec}
\usepackage{fancyhdr}
\usepackage{lipsum}
\usepackage[framemethod=TikZ]{mdframed}
\usepackage{pifont}
\usepackage{pdflscape}
\usepackage{dsfont}
\usepackage{parskip}

\renewcommand{\leq}{\leqslant}
\renewcommand{\geq}{\geqslant}
\renewcommand{\epsilon}{\varepsilon}

\DeclareMathOperator*{\argmin}{\mathrm{arg\,min}}
\DeclareMathOperator*{\argmax}{\mathrm{arg\,max}}

\makeatletter

\makeatother

\hypersetup{
    colorlinks=true,
    linkcolor={red!60!black},
    citecolor={green!40!black}
}

\hoffset       -40pt
\voffset       -30pt
\textwidth     425pt
\textheight    610pt
\parindent     15pt

\newtheorem{theorem}{Theorem}[section]

\newtheorem{remark}[theorem]{Remark}

\begin{document}

\begin{center}
 {\Large Complexity of Linear Minimization\\ and Projection on Some Sets}
\end{center}

\vspace{7mm}

\noindent\textbf{Cyrille W.\ Combettes}\textsuperscript{ 1 3}\hfill\href{mailto:cyrille@gatech.edu}{\ttfamily cyrille@gatech.edu}\\
\\
\textbf{Sebastian Pokutta}\textsuperscript{ 2 3}\hfill\href{mailto:pokutta@zib.de}{\ttfamily pokutta@zib.de}\\
\\
{\small\textsuperscript{1 }\emph{School of Industrial and Systems Engineering, Georgia Institute of Technology, USA}}\\
{\small\textsuperscript{2 }\emph{Institute of Mathematics, Technische Universit\"at Berlin, Germany}}\\
{\small\textsuperscript{3 }\emph{Department for AI in Society, Science, and Technology, Zuse Institute Berlin, Germany}}

\vspace{7mm}

\begin{center}
\begin{minipage}{0.85\textwidth}
\begin{center}
 \textbf{Abstract}
\end{center}
\vspace{3mm}
  {\small The Frank-Wolfe algorithm is a method for constrained optimization that relies on linear minimizations, as opposed to projections. Therefore, a motivation put forward in a large body of work on the Frank-Wolfe algorithm is the computational advantage of solving linear minimizations instead of projections. However, the discussions supporting this advantage are often too succinct or incomplete. In this paper, we review the complexity bounds for both tasks on several sets commonly used in optimization. Projection methods onto the $\ell_p$-ball, $p\in\left]1,2\right[\cup\left]2,+\infty\right[$, and the Birkhoff polytope are also proposed.}
\end{minipage}
\end{center}

\vspace{2mm}

\section{Introduction}

We consider the constrained optimization problem
\begin{align}
 \min_{x\in\mathcal{C}}f(x),\label{pb}
\end{align}
where $\mathcal{C}\subset\mathbb{R}^n$ is a compact convex set and $f\colon\mathbb{R}^n\rightarrow\mathbb{R}$ is a smooth function. Among all general purpose methods addressing problem~\eqref{pb}, the Frank-Wolfe algorithm \cite{fw56}, a.k.a.\ conditional gradient algorithm \cite{levitin66}, has the particularity of never requiring projections onto $\mathcal{C}$. It uses linear minimizations over $\mathcal{C}$ instead and is therefore often referred to as a \emph{projection-free} algorithm in the literature, in the sense that it does not call for solutions to quadratic optimization subproblems.

Thus, a motivation put forward in a large body of work on the Frank-Wolfe algorithm is the computational advantage of solving linear minimizations instead of projections. However, only a few works actually provide examples. On the other hand, the complexities of linear minimizations over several sets are available in \cite{hazan12,jaggi13fw,garber16phd}, but they do not always (accurately) discuss the complexities of the respective projections. Therefore, while it is intuitive that a linear minimization is simpler to solve than a projection in general, a complete quantitative assessment is necessary to properly motivate the projection-free property of the Frank-Wolfe algorithm.

\paragraph{Contributions.} We review the complexity bounds of linear minimizations and projections on several sets commonly used in optimization: the standard simplex, the $\ell_p$-balls for $p\in\left[1,+\infty\right]$, the nuclear norm-ball, the flow polytope, the Birkhoff polytope, and the permutahedron. These sets are selected because linear minimizations or projections can be solved very efficiently, rather than by resorting to a general purpose method, in which case the analysis is less interesting. We also propose two methods for projecting onto the $\ell_p$-ball and the Birkhoff polytope respectively, and we analyze their complexity. Computational experiments for the $\ell_1$-ball and the nuclear norm-ball are presented.

\begin{remark}
 We would like to stress that, while it is possible that a projection-based algorithm requires less iterations than the Frank-Wolfe algorithm to find a solution to problem~\eqref{pb}, our goal here is to demonstrate its advantage in terms of per-iteration complexity. We discuss the Frank-Wolfe algorithm and some successful applications in Section~\ref{sec:fw}.
\end{remark}

\section{Preliminaries}

\subsection{Notation and definitions}

We work in the Euclidean space $\mathbb{R}^n$ or $\mathbb{R}^{m\times n}$ equipped with the standard scalar product $\langle x,y\rangle=x^\top y$ or $\langle X,Y\rangle=\operatorname{tr}(X^\top Y)$. We denote by $\|\cdot\|$ the norm induced by the scalar product, i.e., the $\ell_2$-norm $\|\cdot\|_2$ or the Frobenius norm $\|\cdot\|_{\operatorname{F}}$ respectively. 
For any closed convex set $\mathcal{C}\subset\mathbb{R}^n$, the projection operator onto $\mathcal{C}$, the distance function to $\mathcal{C}$, and the diameter of $\mathcal{C}$, all with respect to $\|\cdot\|$, are denoted by $\operatorname{proj}(\cdot,\mathcal{C})$, $\operatorname{dist}(\cdot,\mathcal{C})$, and $\operatorname{diam}(\mathcal{C})$ respectively.

For every $i,j\in\mathbb{N}$ such that $i\leq j$, the brackets $\llbracket i,j\rrbracket$ denote the set of integers between (and including) $i$ and $j$. For all $x\in\mathbb{R}^n$ and $i,j\in\llbracket1,n\rrbracket$ such that $i\leq j$, $[x]_i$ denotes the $i$-th entry of $x$ and $[x]_{i:j}=([x]_i,\ldots,[x]_j)^\top\in\mathbb{R}^{j-i+1}$.
The signum function is $\operatorname{sign}\colon\lambda\in\mathbb{R}\mapsto1$ if $\lambda>0$, $-1$ if $\lambda<0$, and $0$ if $\lambda=0$.
The characteristic function of an event $E$ is $\mathds{1}_E=1$ if $E$ is true, else $0$. 
The indicator function of a set $\mathcal{C}\subset\mathbb{R}^n$ is $\iota_\mathcal{C}\colon x\in\mathbb{R}^n\mapsto0$ if $x\in\mathcal{C}$, else $+\infty$.
Operations on vectors in $\mathbb{R}^n$, such as $\operatorname{sign}(x),|x|,x^p,\max\{x,y\},xy$, that are conventionally applied to scalars, are carried out entrywise and return a vector in $\mathbb{R}^n$.
The shape of $0$ and $1$ will be clear from context, i.e., a scalar or a vector.
The identity matrix in $\mathbb{R}^{n\times n}$ is denoted by $I_n$. The matrix with all ones in $\mathbb{R}^{m\times n}$ is denoted by $J_{m,n}$, and by $J_n$ if $m=n$.

We adopt the real-number infinite-precision model of computation. The complexity of a computational task is the number of arithmetic operations necessary to execute it. We ran the experiments on a laptop under Linux Ubuntu 20.04 with Intel Core i7-10750H. The code is available at \href{https://github.com/cyrillewcombettes/complexity}{\textcolor{blue}{\ttfamily https://github.com/cyrillewcombettes/complexity}}.

\subsection{The Frank-Wolfe algorithm}
\label{sec:fw}

The Frank-Wolfe algorithm (FW) \cite{fw56}, a.k.a.\ conditional gradient algorithm \cite{levitin66}, is a first-order projection-free algorithm for solving constrained optimization problems~\eqref{pb}. It is presented in Algorithm~\ref{fw}.

\begin{algorithm}[h]
\caption{Frank-Wolfe (FW)}
\label{fw}
\begin{algorithmic}[1]
\REQUIRE Start point $x_0\in\mathcal{C}$, step-size strategy $(\gamma_t)_{t\in\mathbb{N}}\subset\left[0,1\right]$.
\FOR{$t=0$ \textbf{to} $T-1$}
\STATE$v_t\leftarrow\argmin\limits_{v\in\mathcal{C}}\,\langle v,\nabla f(x_t)\rangle$\label{fw:lmo}
\STATE$x_{t+1}\leftarrow x_t+\gamma_t(v_t-x_t)$\label{fw:new}
\ENDFOR
\end{algorithmic}
\end{algorithm}

At each iteration, FW minimizes the linear approximation of $f$ at $x_t$ over $\mathcal{C}$ (Line~\ref{fw:lmo}), i.e.,
\begin{align*}
 \min_{v\in\mathcal{C}}f(x_t)+\langle v-x_t,\nabla f(x_t)\rangle,
\end{align*}
and then moves in the direction of a solution $v_t\in\mathcal{C}$ with a step-size $\gamma_t\in\left[0,1\right]$ (Line~\ref{fw:new}). This ensures that the new iterate $x_{t+1}=(1-\gamma_t)x_t+\gamma_tv_t\in\mathcal{C}$ is feasible by convexity, and there is no need for a projection back onto $\mathcal{C}$. For this \emph{projection-free} property, FW has encountered numerous applications, including solving traffic assignment problems \cite{leblanc75}, performing video co-localization \cite{joulin15video}, or, e.g., developing adversarial attacks \cite{chen20adversarial}.

When $f$ is convex, FW converges at a rate $f(x_t)-\min_\mathcal{C}f=\mathcal{O}(1/t)$ for different step-size strategies \cite{fw56,dunn78,jaggi13fw}, which is optimal in general \cite{canon68,jaggi13fw}. Faster rates can be established under additional assumptions on the properties of $f$ or the geometry of $\mathcal{C}$ \cite{levitin66,guelat86,garber15,kerdreux21}. Recently, several variants have also been developed to improve its performance \cite{lacoste15,lan16cgs,garber16dicg,freund17,braun19lazy,combettes20boost}. When $f$ is nonconvex, \cite{lacoste16} showed that FW converges to a stationary point at a rate $\mathcal{O}(1/\sqrt{t})$ in the gap $\max_{v\in\mathcal{C}}\langle x_t-v,\nabla f(x_t)\rangle$ \cite{hearn82}, which has inspired a line of work in stochastic optimization \cite{reddi16,yurtsever19,xie20,combettes20ada,zhang20}.

Lastly, note that FW is also popular for the natural sparsity of its iterates with respect to the vertices of $\mathcal{C}$, as $x_t\in\operatorname{conv}\{x_0,v_0,\ldots,v_{t-1}\}$ \cite{clarkson10,hazan08,lacoste13svm,luise19,combettes19cara}.

\section{Projections versus linear minimizations}
\label{sec:res}

The Frank-Wolfe algorithm avoids projections by computing linear minimizations instead. In Table~\ref{tab:lmo}, we summarize the complexities of a linear minimization and a (Euclidean) projection on several sets commonly used in optimization. That is, we compare the complexities of solving
\begin{align}
 \min_{x\in\mathcal{C}}\,\langle x,y\rangle
 \quad\text{and}\quad
 \min_{x\in\mathcal{C}}\|x-y\|.\label{lmo-proj}
\end{align}
When an exact solution cannot be computed directly, we compare to the complexity of finding an $\epsilon$-approximate solution; note that the two objectives in~\eqref{lmo-proj} are homogeneous. For the projection problem, it means to solve $\min_{x\in\mathcal{C}}\|x-y\|^2$ using any method but the Frank-Wolfe algorithm, since it would go against the purpose of this paper. Note however that the Frank-Wolfe algorithm can generate a solution with complexity $\mathcal{O}(\operatorname{iter}(\mathcal{C})\operatorname{diam}(\mathcal{C})^2/\epsilon^2)$, where $\operatorname{iter}(\mathcal{C})$ denotes the complexity of an iteration, which amounts to that of a linear minimization over $\mathcal{C}$. When addressing problem~\eqref{pb}, solving projection subproblems via the Frank-Wolfe algorithm is known as conditional gradient sliding \cite{lan16cgs}.

\begin{table}[h]
\vspace{1mm}
 \caption{Complexities of linear minimizations and (Euclidean) projections on some sets commonly used in optimization. We denote by $x^*$ a solution, $\rho=p\sup_{t\in\mathbb{N}}\|x_t\|_{2(p-1)}^{p-1}\|x_t\|_2<+\infty$ where $(x_t)_{t\in\mathbb{N}}$ is the sequence generated by Algorithm~\ref{hgz}, by $\nu$ and $\sigma_1$ the number of nonzero entries and the top singular value of $-Y$ respectively, and by $\epsilon>0$ the additive error in the objective of~\eqref{lmo-proj} when an approximate solution is computed. The constant $d_z$ is defined in~\eqref{dz} and $\tilde{\mathcal{O}}$ hides polylogarithmic factors in $m$ and $n$.}
\label{tab:lmo}
\centering{
 \begin{tabular}{llll} 
  \toprule
  \textbf{Set $\mathcal{C}$}
  &\textbf{Linear minimization}&\textbf{Projection}&\textbf{Reference}\\
  \midrule
  $\ell_p$-ball, $p\in\{1,2,+\infty\}$
  &$\mathcal{O}(n)$&$\mathcal{O}(n)$&Sections~\ref{sec:l1}--\ref{sec:l2inf}\\
  $\ell_p$-ball, $p\in\left]1,2\right[\cup\left]2,+\infty\right[$
  &$\mathcal{O}(n)$&$\mathcal{O}(n\rho^2\|y-x^*\|_2^2/\epsilon^2)$&Section~\ref{sec:lp}\\
  Nuclear norm-ball&$\mathcal{O}(\nu\ln(m+n)\sqrt{\sigma_1}/\sqrt{\epsilon})$&$\mathcal{O}(mn\min\{m,n\})$&Section~\ref{sec:nuc}\\
  Flow polytope&$\mathcal{O}(m+n)$&$\tilde{\mathcal{O}}(m^3n+n^2)$&Section~\ref{sec:flow}\\
  Birkhoff polytope
  &$\mathcal{O}(n^3)$&$\mathcal{O}(n^2d_z^2/\epsilon^2)$&Section~\ref{sec:birk}\\
  Permutahedron&$\mathcal{O}(n\ln(n))$&$\mathcal{O}(n\ln(n)+n)$&Section~\ref{sec:perm}\\
  \bottomrule
 \end{tabular}
 }
\end{table}

We now provide details for the complexities reported in Table~\ref{tab:lmo}. Slightly abusing notation though we may have $\operatorname{card}(\argmin_{x\in\mathcal{C}}\langle x,y\rangle)>1$, we write $\argmin_{x\in\mathcal{C}}\langle x,y\rangle=x^*$ instead of $\argmin_{x\in\mathcal{C}}\langle x,y\rangle\ni x^*$.

\subsection{The \texorpdfstring{$\ell_1$}{l1}-ball and the standard simplex}
\label{sec:l1}

Let $\{e_1,\ldots,e_n\}$ denote the standard basis in $\mathbb{R}^n$. The $\ell_1$-ball is 
\begin{align*}
 \{x\in\mathbb{R}^n\mid\|x\|_1\leq1\}
 =\left\{x\in\mathbb{R}^n\;\middle\vert\;\sum_{i=1}^n|[x]_i|\leq1\right\}
 =\operatorname{conv}\{\pm e_1,\ldots,\pm e_n\},
\end{align*}
and the standard simplex is 
\begin{align*}
 \Delta_n=\{x\in\mathbb{R}^n\mid\langle x,1\rangle=1,x\geq0\}=\operatorname{conv}\{e_1,\ldots,e_n\}.
\end{align*}
A projection onto the $\ell_1$-ball amounts to computing a projection onto the standard simplex, for which the most efficient algorithms have a complexity $\mathcal{O}(n)$; see \cite{condat16} for a review. On the other hand, linear minimizations are available in closed form: for all $y\in\mathbb{R}^n$,
\begin{align*}
 \argmin_{x\in\Delta_n}\,\langle x,y\rangle
 =e_{i_{\min}}
 \quad\text{and}\quad
 \argmin_{\|x\|_1\leq1}\,\langle x,y\rangle
 =-\operatorname{sign}([y]_{i_{\max}})e_{i_{\max}},
\end{align*}
where $i_{\min}\in\argmin_{i\in\llbracket1,n\rrbracket}[y]_i$ and $i_{\max}\in\argmax_{i\in\llbracket1,n\rrbracket}|[y]_i|$.
Thus, while their complexities can both be written $\mathcal{O}(n)$, in practice linear minimizations are much simpler to solve than projections. Figure~\ref{fig:l1} presents a computational comparison. The results are averaged over $5$ runs and the shaded areas represent $\pm1$ standard deviation.

\vspace{2mm}
\begin{figure}[h]
\centering{\includegraphics[scale=0.6]{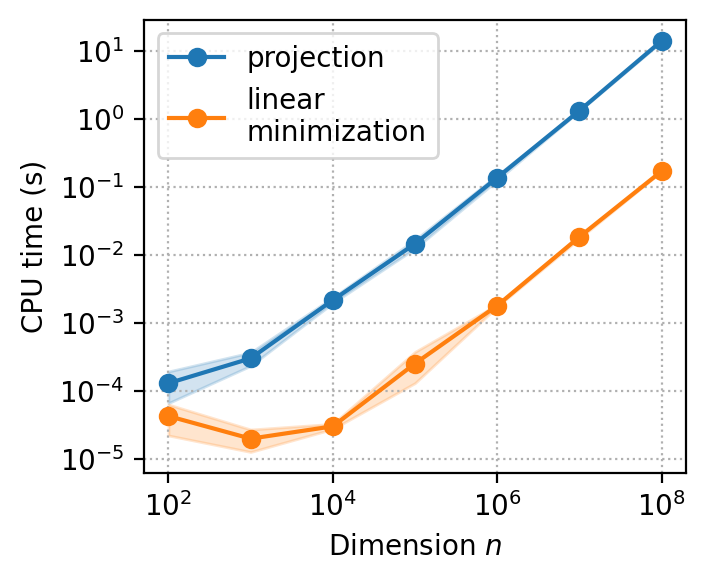}}
\caption{Solving a linear minimization and a projection on the $\ell_1$-ball. The input vector $y\in\mathbb{R}^n$ is generated by sampling entries from the standard normal distribution and the projection method is \cite[Fig.~2]{condat16}, which is state-of-the-art in practice \cite[Tab.~3]{condat16}. In this situation, the plots suggest that linear minimizations are about $100\times$ faster to solve than projections when $n$ is large enough.}
\label{fig:l1}
\end{figure}

\subsection{The \texorpdfstring{$\ell_2$}{l2}-ball and the \texorpdfstring{$\ell_\infty$}{linf}-ball}
\label{sec:l2inf}

For all $x\in\mathbb{R}^n$, $\|x\|_2=\sqrt{\sum_{i=1}^n[x]_i^2}$ and $\|x\|_\infty=\max_{i\in\llbracket1,n\rrbracket}|[x]_i|$.
Here, linear minimizations have no significant advantage over projections as they are all available in closed form. For all $y\in\mathbb{R}^n$,
\begin{align*}
 \argmin_{\|x\|_2\leq1}\,\langle x,y\rangle
 =-\frac{y}{\|y\|_2}
 \quad\text{and}\quad
 \argmin_{\|x\|_2\leq1}\|x-y\|_2
 =\frac{y}{\max\{\|y\|_2,1\}},
\end{align*}
and
\begin{align*}
 \argmin_{\|x\|_\infty\leq1}\,\langle x,y\rangle
 =-\operatorname{sign}(y)
 \quad\text{and}\quad
 \argmin_{\|x\|_\infty\leq1}\|x-y\|_2
 =\operatorname{sign}(y)\min\{|y|,1\}.
\end{align*}

\subsection{The \texorpdfstring{$\ell_p$}{lp}-balls for \texorpdfstring{$p\in\left]1,+\infty\right[$}{p in ]1,+inf[}}
\label{sec:lp}

Let $p\in\left]1,+\infty\right[$. The $\ell_p$-ball is
\begin{align*}
 \{x\in\mathbb{R}^n\mid\|x\|_p\leq1\}
 =\left\{x\in\mathbb{R}^n\;\middle\vert\;\left(\sum_{i=1}^n|[x]_i|^p\right)^{1/p}\leq1\right\}.
\end{align*}
Linear minimizations are available in closed form: by duality, for all $y\in\mathbb{R}^n$,
\begin{align*}
 \argmin_{\|x\|_p\leq1}\,\langle x,y\rangle
 =-\nabla\|\cdot\|_q(y)
 =-\frac{\operatorname{sign}(y)|y|^{q-1}}{\|y\|_q^{q-1}},
\end{align*}
where $q=p/(p-1)\in\left]1,+\infty\right[$.
To the best of our knowledge, there is no projection method specific to the $\ell_p$-ball when $p\in\left]1,2\right[\cup\left]2,+\infty\right[$. We use \cite[Alg.~6.5]{combettes00}, a Haugazeau-like algorithm \cite{haugazeau68} for projecting onto the intersection of sublevel sets of convex functions. For a single sublevel set, the problem reads 
\begin{align}
  \min_{x\in\mathbb{R}^n}\;&\|x-y\|_2^2\label{pb:hgz}\\
 \text{s.t.}\;&g(x)\leq0,\nonumber
\end{align}
and we assume that $g\colon\mathbb{R}^n\to\mathbb{R}$ is convex and differentiable for ease of exposition.
In our case, $g=\|\cdot\|_p^p-1$. Alternatively, one could use a Lagrange multiplier to formulate~\eqref{pb:hgz} as a strongly convex unconstrained problem, but finding the corresponding multiplier may require a considerable effort of tuning; also note that information is usually given in the form $g(x)\leq0$ rather than in the form of a Lagrange multiplier. The method is presented in Algorithm~\ref{hgz}, where for all $a,b\in\mathbb{R}^n$,
\begin{align*}
 H(a,b)=\{x\in\mathbb{R}^n\mid\langle x-b,a-b\rangle\leq0\},
\end{align*}
and $z_t$ is the projection of $x_t$ onto $\{x\in\mathbb{R}^n\mid g(x_t)+\langle x-x_t,\nabla g(x_t)\rangle\leq0\}=H(x_t,z_t)$. If $g(x_t)\leq0$, then $x_{t+s}=x^*$ for all $s\in\mathbb{N}$ \cite[Prop.~3.1]{combettes00}, where $x^*$ is the solution to problem~\eqref{pb:hgz}.

\begin{algorithm}[h]
\caption{Haugazeau-like for problem~\eqref{pb:hgz}}
\label{hgz}
\begin{algorithmic}[1]
\REQUIRE Point to project $y\in\mathbb{R}^n$.
\STATE$x_0\leftarrow y$
\FOR{$t=0$ \textbf{to} $T-1$}
\STATE$z_t\leftarrow x_t-\displaystyle\frac{g(x_t)}{\|\nabla g(x_t)\|_2^2}\nabla g(x_t)\mathds{1}_{\{g(x_t)>0\}}$\label{hgz:G}
\STATE$x_{t+1}\leftarrow\operatorname{proj}(x_0,H(x_0,x_t)\cap H(x_t,z_t))$\label{hgz:proj}
\ENDFOR
\end{algorithmic}
\end{algorithm}

The projection in Line~\ref{hgz:proj} is available in closed form \cite[Thm.~3-1]{haugazeau68}; see also \cite[Cor.~29.25]{plc}. The complexity of an iteration of Algorithm~\ref{hgz} is $\mathcal{O}(n)$. We propose in Theorem~\ref{th:hgz} the convergence rate of Algorithm~\ref{hgz}, based on a key result from \cite[Thm.~7.12]{bauschke96}. A convergence rate is also proposed in \cite{pang15}, however it uses a stronger assumption and has a minor error in the exponent of the constant.

\begin{theorem}
 \label{th:hgz}
 Let $g\colon\mathbb{R}^n\rightarrow\mathbb{R}$ be a differentiable convex function and $\mathcal{C}=\{x\in\mathbb{R}^n\mid g(x)\leq0\}$, and suppose that there exists $\hat{x}\in\mathcal{C}$ such that $g(\hat{x})<0$. Consider Algorithm~\ref{hgz} and let $x^*=\operatorname{proj}(x_0,\mathcal{C})$. Then, for all $t\in\mathbb{N}$,
 \begin{align}
  \|x^*-x_0\|_2^2-\|x_t-x_0\|_2^2
  \leq\frac{\max\{8\rho^2,2\}\|x_0-x^*\|_2^2}{t+2},\label{hgz:rate1}
 \end{align}
 and
 \begin{align}
  \|x_t-x^*\|_2
  \leq\frac{\max\{2\sqrt{2}\rho,\sqrt{2}\}\|x_0-x^*\|_2}{\sqrt{t+2}},\label{hgz:rate2}
 \end{align}
 where $\rho=(-1/g(\hat{x}))\sup_{t\in\mathbb{N}}\|\nabla g(x_t)\|_2\|x_t-\hat{x}\|_2<+\infty$.
\end{theorem}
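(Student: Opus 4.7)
The plan is to decompose the proof into three parts: establishing the geometric invariants needed to apply a general convergence rate from \cite{bauschke96}, deriving the bound~\eqref{hgz:rate1} from that result, and then deducing~\eqref{hgz:rate2} from~\eqref{hgz:rate1} via a Pythagorean argument that uses the structure of the half-space $H(x_0,x_t)$.

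First, I would verify the structural properties that make Algorithm~\ref{hgz} a subgradient-projection method whose Fejér target contains $x^*$. By convexity of $g$, whenever $g(x_t)>0$ the affine minorant $g(x_t)+\langle \cdot-x_t,\nabla g(x_t)\rangle$ is nonnegative off the half-space $H(x_t,z_t)$, so $\mathcal{C}\subseteq H(x_t,z_t)$ and in particular $x^*\in H(x_t,z_t)$; when $g(x_t)\leq0$, $z_t=x_t$ and $H(x_t,z_t)=\mathbb{R}^n$. Combined with the outer cut $H(x_0,x_t)$ (which contains $x^*$ because the Haugazeau construction enforces Fejér monotonicity with respect to $\mathcal{C}$), this gives $x^*\in H(x_0,x_t)\cap H(x_t,z_t)$ for every $t$. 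In particular $(x_t)$ is Fejér monotone with respect to $\{x^*\}$ and hence bounded; since $\nabla g$ is continuous, $(\|\nabla g(x_t)\|_2)$ is bounded as well, so $\rho<+\infty$ (the existence of the Slater point $\hat{x}$ ensures $-1/g(\hat{x})$ is finite).

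Second, to obtain~\eqref{hgz:rate1} I would invoke \cite[Thm.~7.12]{bauschke96}, applied to the outer approximation produced by the one-step subgradient projection $x_t\mapsto z_t$. That result furnishes a quantitative $\mathcal{O}(1/t)$ rate of the form
\begin{align*}
 \|x^*-x_0\|_2^2-\|x_t-x_0\|_2^2\leq \frac{C}{t+2}
\end{align*}
for a Haugazeau-type outer approximation scheme, provided one has a Slater point $\hat{x}$ with $g(\hat{x})<0$ and a uniform bound on the subgradient norms along the iterates. The constant $C$ depends on a ``condition number'' that controls how well the cut $H(x_t,z_t)$ separates $x^*$ from $x_t$: by the standard subgradient inequality, $g(\hat{x})\geq g(x_t)+\langle\hat{x}-x_t,\nabla g(x_t)\rangle$, which with $g(x_t)>0$ yields
\begin{align*}
 \|x_t-z_t\|_2=\frac{g(x_t)}{\|\nabla g(x_t)\|_2}\geq \frac{-g(\hat{x})}{\|\nabla g(x_t)\|_2\|x_t-\hat{x}\|_2}\cdot\|x_t-\hat{x}\|_2.
\end{align*}
This is what produces the factor $\rho$ in the constant, and tracking the constants through the Bauschke--Combettes proof yields $C=\max\{8\rho^2,2\}\|x_0-x^*\|_2^2$.

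Third, the bound~\eqref{hgz:rate2} follows from~\eqref{hgz:rate1} by a single use of the defining inequality of $H(x_0,x_t)$. Since $x^*\in H(x_0,x_t)$, one has $\langle x^*-x_t,x_0-x_t\rangle\leq 0$, and expanding
\begin{align*}
 \|x_0-x^*\|_2^2=\|x_0-x_t\|_2^2+\|x_t-x^*\|_2^2+2\langle x_0-x_t,x_t-x^*\rangle
\end{align*}
gives $\|x_t-x^*\|_2^2\leq\|x_0-x^*\|_2^2-\|x_0-x_t\|_2^2$. Substituting~\eqref{hgz:rate1} into the right-hand side and taking square roots yields~\eqref{hgz:rate2}.

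The main obstacle is the bookkeeping in the second step: the abstract rate in \cite{bauschke96} is stated for a general outer-approximation scheme, and turning its constants into the explicit form $\max\{8\rho^2,2\}$ requires carefully translating their hypothesis into our setting via the Slater slack $-g(\hat{x})$ and the subgradient bound above. Once this translation is done, steps one and three are short.
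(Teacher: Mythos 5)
Your first and third steps are sound and match the paper: the containments $\mathcal{C}\subseteq H(x_t,z_t)$ and $x^*\in H(x_0,x_t)\cap H(x_t,z_t)$, the boundedness of $(x_t)_{t\in\mathbb{N}}$ giving $\rho<+\infty$, and the Pythagorean deduction of~\eqref{hgz:rate2} from~\eqref{hgz:rate1} via $\langle x^*-x_t,x_0-x_t\rangle\leq0$ are all exactly what the paper does. The problem is your second step, which is where the entire content of the theorem lives. You invoke \cite[Thm.~7.12]{bauschke96} as if it already furnished an $\mathcal{O}(1/t)$ rate of the form $\|x^*-x_0\|_2^2-\|x_t-x_0\|_2^2\leq C/(t+2)$ for a Haugazeau-type scheme, with the constant recoverable by ``tracking the constants.'' It does not: that theorem is only a local error bound (linear regularity) of the form $\operatorname{dist}(x_t,\mathcal{C})\leq\rho\operatorname{dist}(x_t,H(x_t,z_t))$, and the convergence rate itself is precisely what this theorem is contributing (the paper explicitly presents the rate as new, ``based on a key result from'' that reference). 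What is missing from your argument is the actual mechanism: (i) the per-step progress bound $\|x_{t+1}-x_0\|_2^2-\|x_t-x_0\|_2^2\geq\|x_{t+1}-x_t\|_2^2\geq\operatorname{dist}(x_t,H(x_t,z_t))^2$, which follows from $x_{t+1}\in H(x_0,x_t)\cap H(x_t,z_t)$; (ii) the lower bound $\operatorname{dist}(x_t,\mathcal{C})\geq\operatorname{dist}(x_t,H(x_0,x^*))\geq\|x_0-x^*\|_2-\|x_t-x_0\|_2\geq\epsilon_t/(2\|x_0-x^*\|_2)$ with $\epsilon_t=\|x^*-x_0\|_2^2-\|x_t-x_0\|_2^2$; and (iii) the resulting recursion $\epsilon_{t+1}\leq(1-\epsilon_t/(4\rho^2\|x_0-x^*\|_2^2))\epsilon_t$, which must be solved by induction to produce $\max\{8\rho^2,2\}\|x_0-x^*\|_2^2/(t+2)$. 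None of this is in your proposal, so the core of the proof is a gap, not bookkeeping.

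A smaller but genuine error: your displayed inequality
\begin{align*}
 \|x_t-z_t\|_2=\frac{g(x_t)}{\|\nabla g(x_t)\|_2}\geq\frac{-g(\hat{x})}{\|\nabla g(x_t)\|_2\|x_t-\hat{x}\|_2}\cdot\|x_t-\hat{x}\|_2
\end{align*}
simplifies to $g(x_t)\geq-g(\hat{x})$, which is false in general (take $x_t$ just outside $\mathcal{C}$ so that $g(x_t)$ is arbitrarily small and positive while $-g(\hat{x})$ is fixed). The correct use of the subgradient inequality goes in the other direction: it shows $\operatorname{dist}(x_t,\mathcal{C})\leq\frac{g(x_t)}{g(x_t)-g(\hat{x})}\|x_t-\hat{x}\|_2\leq\rho\,\frac{g(x_t)}{\|\nabla g(x_t)\|_2}$, i.e., the cut depth $\|x_t-z_t\|_2$ is bounded \emph{below} by $\operatorname{dist}(x_t,\mathcal{C})/\rho$, not by a quantity involving $-g(\hat{x})$ alone. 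This is the inequality~\eqref{hgz:2} that feeds the recursion, and as written your version would not support the argument.
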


\begin{proof}
 First, note that by \cite[Prop.~3.1]{combettes00}, for all $t\in\mathbb{N}$,
 \begin{align}
  \|x_t-x_0\|_2
  \leq\|x_{t+1}-x_0\|_2
  \leq\|x^*-x_0\|_2.\label{inc}
 \end{align}
 We prove by induction that~\eqref{hgz:rate1} holds for all $t\in\mathbb{N}$. The base case $t=0$ is trivial. Suppose that~\eqref{hgz:rate1} holds at iteration $t\in\mathbb{N}$. Since $x_{t+1}\in H(x_0,x_t)$ and $x_{t+1}\in H(x_t,z_t)$, we have
 \begin{align}
  \|x_{t+1}-x_0\|_2^2-\|x_t-x_0\|_2^2
  &=\|x_{t+1}-x_t\|_2^2+2\langle x_{t+1}-x_t,x_t-x_0\rangle\nonumber\\
  &\geq\|x_{t+1}-x_t\|_2^2\nonumber\\
  &\geq\operatorname{dist}(x_t,H(x_t,z_t))^2.\label{hgz:1}
 \end{align}
 By \cite[Thm.~7.12]{bauschke96}, 
 \begin{align}
  \operatorname{dist}(x_t,\mathcal{C})
  \leq\rho\operatorname{dist}(x_t,H(x_t,z_t)),\label{hgz:2}
 \end{align}
 where $\rho=(-1/g(\hat{x}))\sup_{t\in\mathbb{N}}\|\nabla g(x_t)\|_2\|x_t-\hat{x}\|_2$, and $\rho<+\infty$ because $(x_t)_{t\in\mathbb{N}}$ converges \cite[Cor.~30.9]{plc}. Now, $x^*=\operatorname{proj}(x_0,\mathcal{C})$ so $\mathcal{C}\subset H(x_0,x^*)$. We can assume that $x_t\neq x^*$, so $x_t\notin H(x_0,x^*)$ by~\eqref{inc}. By \cite[Ex.~29.20]{plc},
 \begin{align*}
  \operatorname{dist}(x_t,H(x_0,x^*))
  =\frac{\langle x_t-x^*,x_0-x^*\rangle}{\|x_0-x^*\|_2}.
 \end{align*}
 Thus,
 \begin{align}
  \operatorname{dist}(x_t,\mathcal{C})
  &\geq\operatorname{dist}(x_t,H(x_0,x^*))\nonumber\\
  &=\frac{\langle x_t-x^*,x_0-x^*\rangle}{\|x_0-x^*\|_2}\nonumber\\
 &=\frac{\langle x_t-x_0,x_0-x^*\rangle+\|x_0-x^*\|_2^2}{\|x_0-x^*\|_2}\nonumber\\
 &\geq\|x_0-x^*\|_2-\|x_t-x_0\|_2\nonumber\\
 &=\|x_0-x^*\|_2-\sqrt{\|x^*-x_0\|_2^2-(\|x^*-x_0\|_2^2-\|x_t-x_0\|_2^2)}\label{hgz:3}\\
 &\geq0,\nonumber
 \end{align}
 where we used the Cauchy-Schwarz inequality in the second inequality and~\eqref{inc} in the last inequality. Let $\epsilon_s=\|x^*-x_0\|_2^2-\|x_s-x_0\|_2^2$ for $s\in\{t,t+1\}$. Combining~\eqref{hgz:1}--\eqref{hgz:3},
 \begin{align*}
  \epsilon_{t+1}
  &\leq\epsilon_t-\frac{1}{\rho^2}\left(\|x_0-x^*\|_2-\sqrt{\|x^*-x_0\|_2^2-\epsilon_t}\right)^2.
 \end{align*}
 Since $\sqrt{\alpha-\beta}\leq\sqrt{\alpha}-\beta/(2\sqrt{\alpha})$ for all $\alpha\geq\beta>0$, we obtain
 \begin{align*}
  \epsilon_{t+1}
  &\leq\epsilon_t-\frac{1}{\rho^2}\left(\|x_0-x^*\|_2-\left(\|x^*-x_0\|_2-\frac{\epsilon_t}{2\|x^*-x_0\|_2}\right)\right)^2\\
  &=\left(1-\frac{\epsilon_t}{4\rho^2\|x^*-x_0\|_2^2}\right)\epsilon_t.
 \end{align*}
 Let $\kappa=\max\{8\rho^2,2\}\|x_0-x^*\|_2^2$. We have $\epsilon_t\leq\kappa/(t+2)$ by the induction hypothesis, so
 \begin{align*}
  \epsilon_{t+1}
  &\leq\left(1-\frac{\epsilon_t}{\kappa/2}\right)\epsilon_t\\
  &\leq
  \begin{cases}
   \displaystyle\frac{\kappa/2}{t+2}&\text{if }\displaystyle\epsilon_t\leq\frac{\kappa/2}{t+2}\\
   \displaystyle\left(1-\frac{1}{t+2}\right)\frac{\kappa}{t+2}&\text{if }\displaystyle\epsilon_t\geq\frac{\kappa/2}{t+2}
  \end{cases}\\
  &\leq\frac{\kappa}{t+3}.
 \end{align*}
 We conclude that~\eqref{hgz:rate1} holds for all $t\in\mathbb{N}$. Then, for all $t\in\mathbb{N}$,
 \begin{align*}
  \|x_t-x^*\|_2^2
  &=\|x^*-x_0\|_2^2-\|x_t-x_0\|_2^2+2\langle x^*-x_t,x_0-x_t\rangle\\
  &\leq\|x^*-x_0\|_2^2-\|x_t-x_0\|_2^2,
 \end{align*}
 because $x^*\in H(x_0,x_t)$, since $\mathcal{C}\subset H(x_0,x_t)$ \cite[Prop.~5.2]{combettes00}. This proves~\eqref{hgz:rate2}.
\end{proof}

In our case, $g=\|\cdot\|_p^p-1$ and $g(0)=-1<0$, so Theorem~\ref{th:hgz} holds with
\begin{align*}
 \rho
 =\sup_{t\in\mathbb{N}}\|\nabla g(x_t)\|_2\|x_t\|_2
 =p\sup_{t\in\mathbb{N}}\|x_t\|_{2(p-1)}^{p-1}\|x_t\|_2<+\infty.
\end{align*}
Therefore, the complexity of an $\epsilon$-approximate projection onto the $\ell_p$-ball is $\mathcal{O}(n\rho^2\|y-x^*\|_2^2/\epsilon^2)$.

\begin{remark}
 If $p\in\left[1,+\infty\right[\cap\mathbb{Q}$, another option, although probably less practical, is to formulate the projection problem as a conic quadratic program and to obtain an $\epsilon$-approximate solution using an interior-point algorithm, with complexity $\mathcal{O}(\operatorname{poly}(n)\ln(1/\epsilon))$ \cite{bental01}.
\end{remark}

\subsection{The nuclear norm-ball}
\label{sec:nuc}

This is probably the most popular example of the computational advantage of linear minimizations over projections in the literature. The nuclear norm, a.k.a.\ trace norm, of a matrix is the sum of its singular values and serves as a convex surrogate for the rank constraint \cite{fazel01trace}. The nuclear norm-ball is the convex hull of rank-$1$ matrices:
\begin{align*}
 \{X\in\mathbb{R}^{m\times n}\mid\|X\|_{\operatorname{nuc}}\leq1\}
 =\operatorname{conv}\{uv^\top\mid u\in\mathbb{R}^m,v\in\mathbb{R}^n,\|u\|_2=\|v\|_2=1\}.
\end{align*}
For all $Y\in\mathbb{R}^{m\times n}$, 
\begin{align*}
 \argmin_{\|X\|_{\operatorname{nuc}}\leq1}\|X-Y\|_{\operatorname{F}}
 =U\operatorname{diag}(\hat{\sigma})V^\top,
\end{align*}
where $Y=U\operatorname{diag}(\sigma)V^\top$ is the singular value decomposition (SVD) of $Y$, $(U,\sigma,V)\in\mathbb{R}^{m\times k}\times\mathbb{R}^k\times\mathbb{R}^{n\times k}$, $k=\min\{m,n\}$, and $\hat{\sigma}$ is the projection of $\sigma$ onto the standard simplex $\Delta_k$. The SVD can be computed with complexity $\mathcal{O}(mn\min\{m,n\}+\min\{m^3,n^3\})$ using the Golub-Reinsch algorithm or the $R$-SVD algorithm \cite[Fig.~8.6.1]{golub13}. On the other hand, a linear minimization requires only a truncated SVD:
\begin{align*}
 \argmin_{\|X\|_{\operatorname{nuc}}\leq1}\,\langle X,Y\rangle
 =\argmax_{\|u\|_2=\|v\|_2=1}\operatorname{tr}((uv^\top)^\top(-Y))
 =\argmax_{\|u\|_2=\|v\|_2=1}u^\top(-Y)v
 =uv^\top,
\end{align*}
where $u$ and $v$ are the top left and right singular vectors of $-Y$. A pair of unit vectors $(u,v)\in\mathbb{R}^m\times\mathbb{R}^n$ satisfying $\sigma_1-u^\top (-Y)v\leq\epsilon$ with high probability can be obtained using the Lanczos algorithm with complexity $\mathcal{O}(\nu\ln(m+n)\sqrt{\sigma_1}/\sqrt{\epsilon})$, where $\sigma_1$ and $\nu$ denote the top singular value and the number of nonzero entries in $-Y$ respectively \cite{jaggi13fw,kuczynski92}. Note that $\nu\leq mn$ and that in many applications of interest, e.g., in recommender systems, $\nu\ll mn$.

In practice, the package ARPACK \cite{arpack} is often used to compute the top pair of singular vectors. Furthermore, if the input matrix $Y$ is symmetric, then the package LOBPCG \cite{lobpcg} can be particularly efficient. Figure~\ref{fig:nuc} illustrates both cases, where linear minimizations are solved to machine precision. The results are averaged over $5$ runs and the shaded areas represent $\pm1$ standard deviation.

\vspace{1mm}
\begin{figure}[h]
\centering{\includegraphics[scale=0.6]{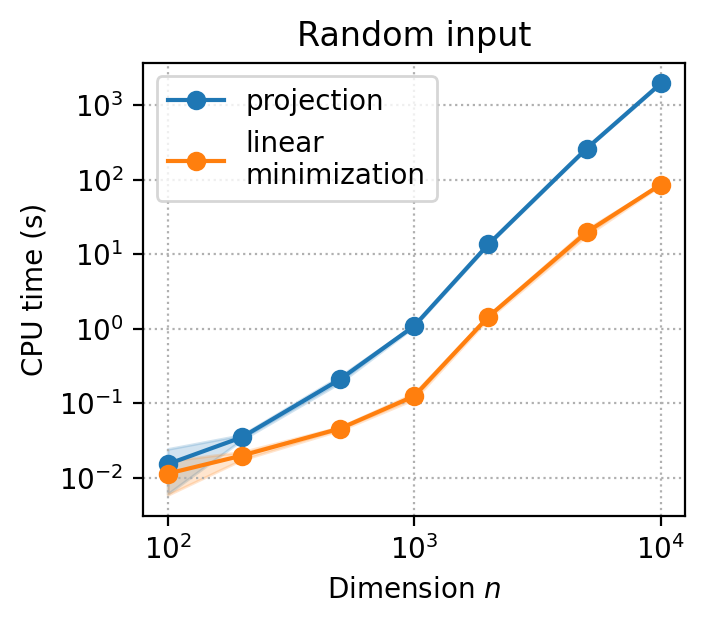}\hspace{15mm}\includegraphics[scale=0.6]{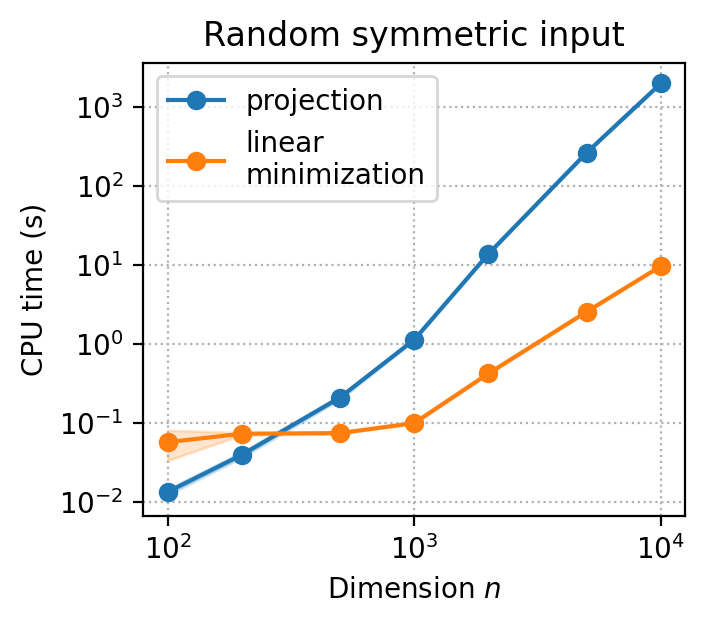}}
\caption{Solving a linear minimization and a projection on the nuclear norm-ball. A matrix $Y\in\mathbb{R}^{n\times n}$ is generated by sampling entries from the standard normal distribution. The full and truncated SVDs are computed using the functions \texttt{svd} and \texttt{svds} from the Python packages \texttt{numpy.linalg} \cite{numpy} and \texttt{scipy.sparse.linalg} \cite{scipy} respectively. The function \texttt{svds} is used with \texttt{tol=0}. \emph{Left:} The input is $Y$ and the function \texttt{svds} is used with \texttt{solver=`arpack'}. \emph{Right:} The input is the symmetric matrix $(Y+Y^\top)/2$ and the function \texttt{svds} is used with \texttt{solver=`lobpcg'}. We see that the ratio of CPU times increases as $n$ increases.}
\label{fig:nuc}
\end{figure}

\subsection{The flow polytope}
\label{sec:flow}

Let $G$ be a single-source single-sink directed acyclic graph (DAG) with $m$ vertices and $n$ edges. Index by $\llbracket1,m\rrbracket$ the set of vertices such that the edges are directed from a smaller to a larger vertex index; this can be achieved with complexity $\mathcal{O}(m+n)$ via topological sort \cite[Sec.~22.4]{cormen09}. Let $A_G\in\mathbb{R}^{m\times n}$ be the incidence matrix of $G$. The flow polytope induced by $G$ is
\begin{align*}
 \mathcal{F}_G=\{x\in\mathbb{R}^n\mid A_Gx=(-1,0,\ldots,0,1)^\top,x\geq0\},
\end{align*}
i.e., $\mathcal{F}_G$ is the set of unit flows $x\in\mathbb{R}^n$ on $G$, where $[x]_i\geq0$ denotes the flow going through edge $i$. Thus, for all $y\in\mathbb{R}^n$, $\argmin_{x\in\mathcal{F}_G}\langle x,y\rangle$ is a flow $x\in\{0,1\}^n$ identifying a shortest path on $G$ weighted by $y$. Its computation has complexity $\mathcal{O}(m+n)$ \cite[Sec.~24.2]{cormen09}. This is significantly cheaper than the complexity $\tilde{\mathcal{O}}(m^3n+n^2)$ of a projection \cite[Thm.~20]{vegh16}, where $\tilde{\mathcal{O}}$ hides polylogarithmic factors.

\subsection{The Birkhoff polytope}
\label{sec:birk}

The Birkhoff polytope, a.k.a.\ assignment polytope, is the set of doubly stochastic matrices 
\begin{align*}
 \mathcal{B}_n=\{X\in\mathbb{R}^{n\times n}\mid X1=1,X^\top1=1,X\geq0\}.
\end{align*}
It is the convex hull of the permutation matrices and arises in matching, ranking, and seriation problems. Linear minimizations can be solved with complexity $\mathcal{O}(n^3)$ using the Hungarian algorithm \cite{kuhn55}. To the best of our knowledge, there is no projection method specific to the Birkhoff polytope so we propose one here. Let $Y\in\mathbb{R}^{n\times n}$. By reshaping it into a vector $y\in \mathbb{R}^{n^2}$, projecting $Y$ onto the Birkhoff polytope is equivalent to solving
\begin{align}
 \begin{aligned}
  \min_{x\in\mathbb{R}^{n^2}}\;&\|x-y\|_2^2\\
 \text{s.t.}\;&Ax=1\\
 &x\geq0,
 \end{aligned}
\quad\quad\text{where}\quad
 A=
 \begin{pmatrix}
  1^\top&0^\top&\cdots&0^\top\\
  0^\top&\ddots&\ddots&\vdots\\
  \vdots&\ddots&\ddots&0^\top\\
  0^\top&\cdots&0^\top&1^\top\\
  I_n&\cdots&\cdots&I_n
 \end{pmatrix}
 \in\mathbb{R}^{2n\times n^2}.\label{a}
\end{align}
This can again be reformulated as
\begin{align}
 \min_{x\in\mathbb{R}^{n^2}}\left(\iota_\mathcal{K}(x)+\frac{1}{2}\|x-y\|_2^2\right)+\left(\iota_\mathcal{A}(x)+\frac{1}{2}\|x-y\|_2^2\right),\label{pb:dr}
\end{align}
where $\mathcal{K}=\{x\in\mathbb{R}^{n^2}\mid x\geq0\}$ and $\mathcal{A}=\{x\in\mathbb{R}^{n^2}\mid Ax=1\}$. That is, we split the constraints into two sets enjoying efficient projections. We can now apply the Douglas-Rachford algorithm \cite{lions79} to problem~\eqref{pb:dr}. The method is presented in Algorithm~\ref{dr:birk}; see Appendix~\ref{apx:birk} for details. Line~\ref{dr:a} computes the projection of $u_t$ onto the affine subspace $\mathcal{A}$ and in Line~\ref{dr:0} is computed a projection onto the nonnegative orthant $\mathcal{K}$. We can set $u=1/n\in\mathcal{A}$ and we denote by $A^\dagger\in\mathbb{R}^{n^2\times2n}$ the Moore-Penrose inverse of $A$.

\begin{algorithm}[h]
\caption{Douglas-Rachford for problem~\eqref{pb:dr}}
\label{dr:birk}
\begin{algorithmic}[1]
\REQUIRE Point to project $y\in\mathbb{R}^{n^2}$, start point $z_0\in\mathbb{R}^{n^2}$, offset point $u\in\mathcal{A}$.
\FOR{$t=0$ \textbf{to} $T-1$}
\STATE$u_t\leftarrow\displaystyle\frac{z_t+y}{2}$\label{dr:u}
\STATE$x_t\leftarrow u_t-A^\dagger A(u_t-u)$\label{dr:a}
\STATE$z_{t+1}\leftarrow\max\left\{\displaystyle\frac{2x_t-z_t+y}{2},0\right\}+z_t-x_t$\label{dr:0}
\ENDFOR
\end{algorithmic}
\end{algorithm}

The complexity of an iteration of Algorithm~\ref{dr:birk} is dominated by the matrix-vector multiplication in Line~\ref{dr:a}. We can assume that $A^\dagger A\in\mathbb{R}^{n^2\times n^2}$ is precomputed. In fact,
\begin{align*}
 A^\dagger A=\frac{1}{n^2}
 \begin{pmatrix}
  B_1&B_2&\cdots&B_2\\
  B_2&\ddots&\ddots&\vdots\\
  \vdots&\ddots&\ddots&B_2\\
  B_2&\cdots&B_2&B_1\\
 \end{pmatrix}
 \in\mathbb{R}^{n^2\times n^2},
 \quad\text{where}\quad
 \begin{cases}
  B_1=nI_n+(n-1)J_n\in\mathbb{R}^{n\times n}\\
  B_2=nI_n-J_n\in\mathbb{R}^{n\times n},
 \end{cases}
\end{align*}
so $A^\dagger A$ is block circulant with circulant blocks (BCCB) and has only three distinct entries: $2n-1$, $n-1$, and $-1$. 
The expression of $A^\dagger A$ can be shown by checking that $A^\dagger=A^\top/n-J_{n^2,2n}/(2n^2)\in\mathbb{R}^{n^2\times2n}$ using the necessary and sufficient Moore-Penrose conditions \cite[Thm.~1]{penrose55}. Thus, the multiplication of $A^\dagger A$ and any vector $x\in\mathbb{R}^{n^2}$ can be performed with complexity $\mathcal{O}(n^2)$. Indeed, it amounts to computing $(nI_n+(n-1)J_n)[x]_{i:j}$ and $(nI_n-J_n)[x]_{i:j}$ for every $(i,j)\in\{(kn+1,(k+1)n)\mid k\in\llbracket0,n-1\rrbracket\}$, each of which has complexity $\mathcal{O}(n)$.

It remains to bound the number of iterations required to achieve $\epsilon$-convergence. Let $x^*=\operatorname{proj}(y,\mathcal{A}\cap\mathcal{K})$ be the solution to problem~\eqref{pb:dr}, i.e., the projection of $Y$ onto the Birkhoff polytope after reshaping, and let $\bar{x}_t=(\sum_{s=0}^tx_s)/(t+1)\in\mathcal{A}$ for all $t\in\mathbb{N}$. By \cite[Thm.~1]{davis17},
\begin{align*}
  \|\bar{x}_t-x^*\|_2
  \leq\frac{\|z_0-z^*\|_2}{\sqrt{2(t+1)}},
\end{align*}
where $z^*$ is a fixed point of $\operatorname{rprox}_{\iota_\mathcal{K}+(1/2)\|\cdot-y\|_2^2}\circ\operatorname{rprox}_{\iota_\mathcal{A}+(1/2)\|\cdot-y\|_2^2}$, $\operatorname{rprox}_\varphi=2\operatorname{prox}_\varphi-\operatorname{id}$, and $\operatorname{prox}_\varphi$ is the proximity operator of $\varphi$ \cite{moreau62}. Therefore, the complexity of an $\epsilon$-approximate projection onto the Birkhoff polytope is $\mathcal{O}(n^2d_z^2/\epsilon^2)$, where
\begin{align}
 d_z=\|z_0-z^*\|_2.\label{dz}
\end{align}

\subsection{The permutahedron}
\label{sec:perm}

Let $\mathfrak{S}_n$ be the set of permutations on $\llbracket1,n\rrbracket$ and $w\in\mathbb{R}^n$. The permutahedron induced by $w$ is the convex hull of all permutations of the entries in $w$, i.e.,
\begin{align*}
 \mathcal{P}_w=\operatorname{conv}\{w_\sigma\in\mathbb{R}^n\mid w_\sigma=([w]_{\sigma_1},\ldots,[w]_{\sigma_{n}})^\top,\sigma\in\mathfrak{S}_n\}.
\end{align*}
It is related to the Birkhoff polytope via $\mathcal{P}_w=\{Xw\mid X\in\mathcal{B}_n\}$.
With no loss of generality, we can assume that the weights are already sorted in ascending order: $[w]_1\leq\cdots\leq[w]_n$. Thus, for all $y\in\mathbb{R}^n$,
\begin{align*}
 \argmin_{x\in\mathcal{P}_w}\,\langle x,y\rangle
 =w_{\sigma^{-1}},
\end{align*}
where $\sigma$ satisfies $[y]_{\sigma_1}\geq\cdots\geq [y]_{\sigma_n}$. Sorting the entries of $y$ has complexity $\mathcal{O}(n\ln(n))$ \cite{cormen09}. A projection can be obtained with a slightly higher complexity $\mathcal{O}(n\ln(n)+n)$, by sorting the entries of $y$ and solving an isotonic regression problem \cite{negrinho14}.

\subsection*{Acknowledgment}

Research reported in this paper was partially supported by the Research Campus MODAL funded by the German Federal Ministry of Education and Research under grant 05M14ZAM.

\appendix

\section{An application of the Douglas-Rachford algorithm}
\label{apx:birk}

Let $\mathcal{H}$ be a Euclidean space with norm $\|\cdot\|$ and denote by $\Gamma_0(\mathcal{H})$ the set of proper lower semicontinuous convex functions $\mathcal{H}\rightarrow\mathbb{R}\cup\{+\infty\}$. 
The Douglas-Rachford algorithm \cite{lions79} can be used to solve
\begin{align*}
 \min_{x\in\mathcal{H}}f(x)+g(x),
\end{align*}
when $f,g\in\Gamma_0(\mathcal{H})$ satisfy $\argmin_\mathcal{H}(f+g)\neq\varnothing$ and $(\operatorname{ri}\operatorname{dom}f)\cap(\operatorname{ri}\operatorname{dom}g)\neq\varnothing$ \cite{plc}, where $\operatorname{ri}$ and $\operatorname{dom}$ denote the relative interior of a set and the domain of a function respectively. It is presented in Algorithm~\ref{dr}. For every function $\varphi\in\Gamma_0(\mathcal{H})$, the proximity operator is $\operatorname{prox}_\varphi=\argmin_{x\in\mathcal{H}}\varphi(x)+(1/2)\|\cdot-x\|^2$ \cite{moreau62}.

\begin{algorithm}[h]
\caption{Douglas-Rachford}
\label{dr}
\begin{algorithmic}[1]
\REQUIRE Start point $z_0\in\mathcal{H}$.
\FOR{$t=0$ \textbf{to} $T-1$}
\STATE$x_t\leftarrow\operatorname{prox}_g(z_t)$\label{prox1}
\STATE$z_{t+1}\leftarrow\operatorname{prox}_f(2x_t-z_t)+z_t-x_t$\label{prox2}
\ENDFOR
\end{algorithmic}
\end{algorithm}

We are interested in an application to problem~\eqref{pb:dr}, where $\mathcal{H}=\mathbb{R}^{n^2}$, $f=\iota_\mathcal{K}+(1/2)\|\cdot-y\|_2^2$, $g=\iota_\mathcal{A}+(1/2)\|\cdot-y\|_2^2$, $y\in\mathcal{H}$, $\mathcal{K}=\{x\in\mathbb{R}^{n^2}\mid x\geq0\}$, $\mathcal{A}=\{x\in\mathbb{R}^{n^2}\mid Ax=1\}$, and $A\in\mathbb{R}^{2n\times n^2}$ is defined in~\eqref{a}. Problem~\eqref{pb:dr} admits a (unique) solution since it is a projection problem onto the intersection of the closed convex sets $\mathcal{K}$ and $\mathcal{A}$, and $1/n\in(\operatorname{ri}\operatorname{dom}f)\cap(\operatorname{ri}\operatorname{dom}g)=(\operatorname{ri}\mathcal{K})\cap\mathcal{A}$ so the Douglas-Rachford algorithm is well defined here. We now show that it reduces to Algorithm~\ref{dr:birk}. For all $t\in\mathbb{N}$,
\begin{align*}
 x_t
 &=\operatorname{prox}_g(z_t)\\
 &=\argmin_{x\in\mathcal{H}}g(x)+\frac{1}{2}\|z_t-x\|_2^2\\
 &=\argmin_{x\in\mathcal{A}}\frac{1}{2}\|x-y\|_2^2+\frac{1}{2}\|x-z_t\|_2^2\\
 &=\argmin_{x\in\mathcal{A}}\left\|x-\frac{z_t+y}{2}\right\|_2^2\\
 &=\operatorname{proj}\left(\frac{z_t+y}{2},\mathcal{A}\right)\\
 &=\frac{z_t+y}{2}-A^\dagger A\left(\frac{z_t+y}{2}-u\right),
\end{align*}
since $\operatorname{proj}(\cdot,\mathcal{A})=\cdot-A^\dagger A(\cdot-u)$, given any $u\in\mathcal{A}$ \cite[Ex.~29.17]{plc}. Thus, Lines~\ref{dr:u}--\ref{dr:a} in Algorithm~\ref{dr:birk} are equivalent to Line~\ref{prox1} in Algorithm~\ref{dr}. Similarly, Line~\ref{dr:0} in Algorithm~\ref{dr:birk} is equivalent to Line~\ref{prox2} in Algorithm~\ref{dr}:
\begin{align*}
 z_{t+1}
 &=\operatorname{prox}_f(2x_t-z_t)+z_t-x_t\\
 &=\argmin_{z\in\mathcal{H}}\left(f(z)+\frac{1}{2}\|2x_t-z_t-z\|_2^2\right)+z_t-x_t\\
 &=\argmin_{z\in\mathcal{K}}\left(\frac{1}{2}\|z-y\|_2^2+\frac{1}{2}\|z-(2x_t-z_t)\|_2^2\right)+z_t-x_t\\
 &=\argmin_{z\in\mathcal{K}}\left\|z-\frac{2x_t-z_t+y}{2}\right\|_2^2+z_t-x_t\\
 &=\operatorname{proj}\left(\frac{2x_t-z_t+y}{2},\mathcal{K}\right)+z_t-x_t\\
 &=\max\left\{\frac{2x_t-z_t+y}{2},0\right\}+z_t-x_t,
\end{align*}
since $\operatorname{proj}(\cdot,\mathcal{K})=\max\{\cdot,0\}$.

\bibliographystyle{abbrv}
{\small\bibliography{biblio}}

\begin{thebibliography}{10}

\bibitem{bauschke96}
H.~H. Bauschke and J.~M. Borwein.
\newblock On projection algorithms for solving convex feasibility problems.
\newblock {\em SIAM Review}, 38(3):367--426, 1996.

\bibitem{plc}
H.~H. Bauschke and P.~L. Combettes.
\newblock {\em Convex Analysis and Monotone Operator Theory in Hilbert Spaces}.
\newblock Springer, 2nd edition, 2017.

\bibitem{bental01}
A.~Ben-Tal and A.~S. Nemirovski.
\newblock {\em Lectures on Modern Convex Optimization: Analysis, Algorithms,
  and Engineering Applications}.
\newblock Society for Industrial and Applied Mathematics, 2001.

\bibitem{braun19lazy}
G.~Braun, S.~Pokutta, and D.~Zink.
\newblock Lazifying conditional gradient algorithms.
\newblock {\em Journal of Machine Learning Research}, 20(71):1--42, 2019.

\bibitem{canon68}
M.~D. Canon and C.~D. Cullum.
\newblock A tight upper bound on the rate of convergence of {F}rank-{W}olfe
  algorithm.
\newblock {\em SIAM Journal on Control}, 6(4):509--516, 1968.

\bibitem{chen20adversarial}
J.~Chen, D.~Zhou, J.~Yi, and Q.~Gu.
\newblock A {F}rank-{W}olfe framework for efficient and effective adversarial
  attacks.
\newblock In {\em Proceedings of the 34th AAAI Conference on Artificial
  Intelligence}, pages 3486--3494, 2020.

\bibitem{clarkson10}
K.~L. Clarkson.
\newblock Coresets, sparse greedy approximation, and the {F}rank-{W}olfe
  algorithm.
\newblock {\em ACM Transactions on Algorithms}, 6(4):1--30, 2010.

\bibitem{combettes19cara}
C.~W. Combettes and S.~Pokutta.
\newblock Revisiting the approximate {C}arath{\'e}odory problem via the
  {F}rank-{W}olfe algorithm.
\newblock {\em arXiv preprint arXiv:1911.04415}, 2019.

\bibitem{combettes20boost}
C.~W. Combettes and S.~Pokutta.
\newblock Boosting {F}rank-{W}olfe by chasing gradients.
\newblock In {\em Proceedings of the 37th International Conference on Machine
  Learning}, pages 2111--2121, 2020.

\bibitem{combettes20ada}
C.~W. Combettes, C.~Spiegel, and S.~Pokutta.
\newblock Projection-free adaptive gradients for large-scale optimization.
\newblock {\em arXiv preprint arXiv:2009.14114}, 2020.

\bibitem{combettes00}
P.~L. Combettes.
\newblock Strong convergence of block-iterative outer approximation methods for
  convex optimization.
\newblock {\em SIAM Journal on Control and Optimization}, 38(2):538--565, 2000.

\bibitem{condat16}
L.~Condat.
\newblock Fast projection onto the simplex and the $\ell_1$ ball.
\newblock {\em Mathematical Programming}, 158(1):575--585, 2016.

\bibitem{cormen09}
T.~H. Cormen, C.~E. Leiserson, R.~L. Rivest, and C.~Stein.
\newblock {\em Introduction to Algorithms}.
\newblock MIT Press, 3rd edition, 2009.

\bibitem{davis17}
D.~Davis and W.~Yin.
\newblock Faster convergence rates of relaxed {P}eaceman-{R}achford and {ADMM}
  under regularity assumptions.
\newblock {\em Mathematics of Operations Research}, 42(3):783--805, 2017.

\bibitem{dunn78}
J.~C. Dunn and S.~Harshbarger.
\newblock Conditional gradient algorithms with open loop step size rules.
\newblock {\em Journal of Mathematical Analysis and Applications},
  62(2):432--444, 1978.

\bibitem{fazel01trace}
M.~Fazel, H.~Hindi, and S.~P. Boyd.
\newblock A rank minimization heuristic with application to minimum order
  system approximation.
\newblock In {\em Proceedings of the 2001 American Control Conference}, pages
  4734--4739, 2001.

\bibitem{fw56}
M.~Frank and P.~Wolfe.
\newblock An algorithm for quadratic programming.
\newblock {\em Naval Research Logistics Quarterly}, 3(1--2):95--110, 1956.

\bibitem{freund17}
R.~M. Freund, P.~Grigas, and R.~Mazumder.
\newblock An extended {F}rank-{W}olfe method with ``in-face'' directions, and
  its application to low-rank matrix completion.
\newblock {\em SIAM Journal on Optimization}, 27(1):319--346, 2017.

\bibitem{garber16phd}
D.~Garber.
\newblock {\em Projection-free Algorithms for Convex Optimization and Online
  Learning}.
\newblock {Ph.D.} thesis, Technion, 2016.

\bibitem{garber15}
D.~Garber and E.~Hazan.
\newblock Faster rates for the {F}rank-{W}olfe method over strongly-convex
  sets.
\newblock In {\em Proceedings of the 32nd International Conference on Machine
  Learning}, pages 541--549, 2015.

\bibitem{garber16dicg}
D.~Garber and O.~Meshi.
\newblock Linear-memory and decomposition-invariant linearly convergent
  conditional gradient algorithm for structured polytopes.
\newblock In {\em Advances in Neural Information Processing Systems},
  volume~29, pages 1001--1009, 2016.

\bibitem{golub13}
G.~H. Golub and C.~F. van Loan.
\newblock {\em Matrix Computations}.
\newblock Johns Hopkins University Press, 4th edition, 2013.

\bibitem{guelat86}
J.~Gu{\'e}lat and P.~Marcotte.
\newblock Some comments on {W}olfe's `away step'.
\newblock {\em Mathematical Programming}, 35(1):110--119, 1986.

\bibitem{numpy}
C.~R. Harris~et al.
\newblock Array programming with {NumPy}.
\newblock {\em Nature}, 585(7825):357--362, 2020.

\bibitem{haugazeau68}
Y.~Haugazeau.
\newblock {\em Sur les In{\'e}quations Variationnelles et la Minimisation de
  Fonctionnelles Convexes}.
\newblock Th{\`e}se de doctorat, Universit{\'e} de Paris, 1968.

\bibitem{hazan08}
E.~Hazan.
\newblock Sparse approximate solutions to semidefinite programs.
\newblock In {\em Proceedings of the 8th Latin American Symposium on
  Theoretical Informatics}, pages 306--316, 2008.

\bibitem{hazan12}
E.~Hazan and S.~Kale.
\newblock Projection-free online learning.
\newblock In {\em Proceedings of the 29th International Conference on Machine
  Learning}, 2012.

\bibitem{hearn82}
D.~W. Hearn.
\newblock The gap function of a convex program.
\newblock {\em Operations Research Letters}, 1(2):67--71, 1982.

\bibitem{jaggi13fw}
M.~Jaggi.
\newblock Revisiting {F}rank-{W}olfe: Projection-free sparse convex
  optimization.
\newblock In {\em Proceedings of the 30th International Conference on Machine
  Learning}, pages 427--435, 2013.

\bibitem{joulin15video}
A.~Joulin, K.~Tang, and L.~Fei-Fei.
\newblock Efficient image and video co-localization with {F}rank-{W}olfe
  algorithm.
\newblock In {\em European Conference on Computer Vision}, pages 253--268,
  2014.

\bibitem{kerdreux21}
T.~Kerdreux, A.~d'Aspremont, and S.~Pokutta.
\newblock Projection-free optimization on uniformly convex sets.
\newblock In {\em Proceedings of the 24th International Conference on
  Artificial Intelligence and Statistics}, pages 19--27, 2021.

\bibitem{lobpcg}
A.~V. Knyazev, M.~E. Argentati, I.~Lashuk, and E.~E. Ovtchinnikov.
\newblock Block locally optimal preconditioned eigenvalue xolvers ({BLOPEX}) in
  hypre and {PETS}c.
\newblock {\em SIAM Journal on Scientific Computing}, 29(5):2224--2239, 2007.

\bibitem{kuczynski92}
J.~Kuczy{\'n}ski and H.~Wo{\'z}niakowski.
\newblock Estimating the largest eigenvalue by the power and {L}anczos
  algorithms with a random start.
\newblock {\em SIAM Journal on Matrix Analysis and Applications},
  13(4):1094--1122, 1992.

\bibitem{kuhn55}
H.~W. Kuhn.
\newblock The {H}ungarian method for the assignment problem.
\newblock {\em Naval Research Logistics Quarterly}, 2(1--2):83--97, 1955.

\bibitem{lacoste16}
S.~Lacoste-Julien.
\newblock Convergence rate of {F}rank-{W}olfe for non-convex objectives.
\newblock {\em arXiv preprint arXiv:1607.00345}, 2016.

\bibitem{lacoste15}
S.~Lacoste-Julien and M.~Jaggi.
\newblock On the global linear convergence of {F}rank-{W}olfe optimization
  variants.
\newblock In {\em Advances in Neural Information Processing Systems},
  volume~28, pages 496--504, 2015.

\bibitem{lacoste13svm}
S.~Lacoste-Julien, M.~Jaggi, M.~Schmidt, and P.~Pletscher.
\newblock Block-coordinate {F}rank-{W}olfe optimization for structural {SVM}s.
\newblock In {\em Proceedings of the 30th International Conference on Machine
  Learning}, pages 53--61, 2013.

\bibitem{lan16cgs}
G.~Lan and Y.~Zhou.
\newblock Conditional gradient sliding for convex optimization.
\newblock {\em SIAM Journal on Optimization}, 26(2):1379--1409, 2016.

\bibitem{leblanc75}
L.~J. LeBlanc, E.~K. Morlok, and W.~P. Pierskalla.
\newblock An efficient approach to solving the road network equilibrium traffic
  assignment problem.
\newblock {\em Transportation Research}, 9(5):309--318, 1975.

\bibitem{arpack}
R.~B. Lehoucq, D.~C. Sorensen, and C.~Yang.
\newblock {\em ARPACK Users' Guide: Solution of Large-Scale Eigenvalue Problems
  with Implicitly Restarted Arnoldi Methods}.
\newblock Society for Industrial and Applied Mathematics, 1998.

\bibitem{levitin66}
E.~S. Levitin and B.~T. Polyak.
\newblock Constrained minimization methods.
\newblock {\em USSR Computational Mathematics and Mathematical Physics},
  6(5):1--50, 1966.

\bibitem{lions79}
P.-L. Lions and B.~Mercier.
\newblock Splitting algorithms for the sum of two nonlinear operators.
\newblock {\em SIAM Journal on Numerical Analysis}, 16(6):964--979, 1979.

\bibitem{luise19}
G.~Luise, S.~Salzo, M.~Pontil, and C.~Ciliberto.
\newblock Sinkhorn barycenters with free support via {F}rank-{W}olfe algorithm.
\newblock In {\em Advances in Neural Information Processing Systems},
  volume~32, pages 9322--9333, 2019.

\bibitem{moreau62}
J.~J. Moreau.
\newblock Fonctions convexes duales et points proximaux dans un espace
  hilbertien.
\newblock {\em Comptes Rendus Hebdomadaires des S{\'e}ances de l'Acad{\'e}mie
  des Sciences}, 255:2897--2899, 1962.

\bibitem{negrinho14}
R.~Negrinho and A.~F.~T. Martins.
\newblock Orbit regularization.
\newblock In {\em Advances in Neural Information Processing Systems},
  volume~27, pages 3221--3229, 2014.

\bibitem{pang15}
C.~H.~J. Pang.
\newblock First order constrained optimization algorithms with feasibility
  updates.
\newblock {\em arXiv preprint arXiv:1506.08247}, 2015.

\bibitem{penrose55}
R.~Penrose.
\newblock A generalized inverse for matrices.
\newblock {\em Mathematical Proceedings of the Cambridge Philosophical
  Society}, 51(3):406--413, 1955.

\bibitem{reddi16}
S.~J. Reddi, S.~Sra, B.~P{\'o}czos, and A.~Smola.
\newblock Stochastic {F}rank-{W}olfe methods for nonconvex optimization.
\newblock In {\em 54th Annual Allerton Conference on Communication, Control,
  and Computing}, pages 1244--1251, 2016.

\bibitem{vegh16}
L.~A. V{\'egh}.
\newblock A strongly polynomial algorithm for a class of minimum-cost flow
  problems with separable convex objectives.
\newblock {\em SIAM Journal on Computing}, 45(5):1729--1761, 2016.

\bibitem{scipy}
P.~Virtanen~et al.
\newblock Scipy 1.0: Fundamental algorithms for scientific computing in
  {Python}.
\newblock {\em Nature Methods}, 17(3):261--272, 2020.

\bibitem{xie20}
J.~Xie, Z.~Shen, C.~Zhang, H.~Qian, and B.~Wang.
\newblock Efficient projection-free online methods with stochastic recursive
  gradient.
\newblock In {\em Proceedings of the 34th AAAI Conference on Artificial
  Intelligence}, pages 6446--6453, 2020.

\bibitem{yurtsever19}
A.~Yurtsever, S.~Sra, and V.~Cevher.
\newblock Conditional gradient methods via stochastic path-integrated
  differential estimator.
\newblock In {\em Proceedings of the 36th International Conference on Machine
  Learning}, pages 7282--7291, 2019.

\bibitem{zhang20}
M.~Zhang, Z.~Shen, A.~Mokhtari, H.~Hassani, and A.~Karbasi.
\newblock One sample stochastic {F}rank-{W}olfe.
\newblock In {\em Proceedings of the 23rd International Conference on
  Artificial Intelligence and Statistics}, pages 4012--4023, 2020.

\end{thebibliography}

\end{document}